\documentclass[11pt]{article}
\usepackage{amssymb}
\usepackage{amsmath}
\usepackage{amsthm}
\usepackage[textwidth=16cm, textheight=24cm]{geometry}

\usepackage[english]{babel}
\usepackage[utf8]{inputenc}
\usepackage[T1]{fontenc}
\vfuzz4pt 
\hfuzz4pt 
\newtheorem{thm}{Theorem}
\newtheorem{remark}[thm]{Remark}
\newtheorem{prop}[thm]{Proposition}
\newtheorem{cor}[thm]{Corollary}
\newtheorem{lem}[thm]{Lemma}
\newtheorem{defn}[thm]{Definition}

\def\lin{\operatorname {lin}}
\def\dim{\operatorname{dim}}

    \def\tensor{\otimes}

    \def\hook{\lrcorner}

    \begin{document}

    \title{An upper bound for the Waring rank of a form}
    \author{Joachim Jelisiejew\thanks{Supported by the project
    ``Secant varieties, computational complexity, and toric degenerations''
    realised within the Homing Plus programme of Foundation
    for Polish Science,
    co-financed from European Union, Regional
    Development Fund. This paper is a part of ``Computational complexity,
    generalised Waring type problems and tensor decompositions'' project
    within ``Canaletto'',  the executive program for scientific and
    technological cooperation between Italy and Poland, 2013-2015.}}
    \maketitle

    \begin{abstract}
        In this paper we introduce the open Waring rank of a form of degree
        $d$ in $n$ variables and prove the that this rank in bounded from
        above by
        \[
        \binom{n+d-2}{d-1} - \binom{n+d-6}{d-3}
        \]
        whenever $n, d\geq 3$. This proves the same upper bound for the
        classical Waring rank of a form, improving the result of \cite{BBS}
        and giving, as far as we know, the best upper bound known.
    \end{abstract}

    {\footnotesize\noindent\textbf{e-mail address:}
    \texttt{jj277546@students.mimuw.edu.pl}.\\
    \textbf{keywords:} sums of powers of linear polynomials, Waring problem,
    Waring rank.\\
    \textbf{AMS Mathematical Subject Classification 2010:} 13P05.\\}

    \section{Introduction}
    \def\pp#1{\left( #1 \right)}
    \def\DRk#1{Ork\pp{#1}}
    \def\DRkF{\DRk{F}}
    \def\DRknd{\DRk{n,d}}
    \def\DSnd{S(n,d)}

    In \cite{BBS} the authors introduce a stronger version of the Waring rank
    of a homogeneous polynomial and give an upper bound for this rank.
    Denoting this rank by $S(n, d)$, the key point of the proof is the inequality
    \begin{equation}\label{eqref:inductivestep:eqn}
        S(n, d) \leq S(n, d-1) + S(n-1, d),
    \end{equation}
    for $n,d\geq 3$, which gives a recursive step in the proof of the bound $S(n, d) \leq
    \binom{n + d - 2}{d - 1}$. The base cases of the recursion are the
    equalities 
    \begin{equation}\label{eqref:basecases}
        S(2, d) = d\quad \mbox{and} \quad S(n, 2) = n,
    \end{equation}
    so the smallest case where the
    obtained upper bound may be sharp is $S(3, 3)$; \cite{BBS} gives the
    bound $S(3, 3) \leq 6$.
    In this article we introduce an even slightly stronger version of the rank,
    denoted $\DRknd$.
    We prove the inequality
    \eqref{eqref:inductivestep:eqn} together with the base cases \eqref{eqref:basecases} for $\DRknd$
     thus obtaining a bound
    \[
    \DRknd \leq \binom{n+d-2}{d-1}.
    \]
    Next we prove that $\DRk{3, 3} = 5$ which improves the upper bound 
    to
    \[
    \DRknd \leq \binom{n+d-2}{d-1} - \binom{n+d-6}{d-3}.
    \]
    In the proof we will both adopt (in Lemma \ref{goodforms:lem}) and reference the ideas from Johannes
    Kleppe Master thesis \cite{Kl}.

    \vspace{1em}
    \def\AA{\mathbb{A}}%
    \paragraph{Notation.} Let $k$ be an algebraically closed field of characteristic $0$ and $S = k[x_1,\dots,x_n]$ be a polynomial ring. We will often think of
    $S_1$ as an affine space; in this spirit let $V \subsetneq S_1$ be
    a Zariski closed subset. Let $S^* = k\left[
    \frac{\partial}{\partial x_1},\dots, \frac{\partial}{\partial x_n}
    \right]$ be the ring of differential operators with its usual action on
    $S$, which is denoted by $(-) \hook (-): S^* \tensor S \to S$. For $F\in S_d$ by $F^{\perp} \subseteq S^*$ we denote the annihilator
    of $F$ with respect to this action.

    \begin{defn}
        A form $F\in S_d$ \emph{essentially depends on $n$ variables} if
        it cannot be written using less than $n$ variables after a linear
        change of coordinates.
    \end{defn}

    \begin{defn}
        For a form $F$ of degree $d$ in $S$ and $V \subseteq S_1$ let $m = \DRk{F, V}$ be the minimal natural number such that there exists
        a presentation
        \[
        F = \sum_{i=1}^m l_i^d,\mbox{ where } l_i\notin V.
        \]
        or $\DRk{F, V} = \infty$ if such presentation does not exist.
        Define the \emph{open Waring rank} of $F$ by
        \[\DRkF := \sup \left\{\DRk{F, V}\ |\ V \subsetneq S_1\mbox{ homogeneous
        and Zariski closed}\right\}.\]
        Finally take
        \[
        \DRknd := \sup\left\{ \DRkF\ |\ F\in S_d\mbox{ essentially depends
        on }n\mbox{ variables}\right\}.
        \]
    \end{defn}
    \begin{remark} The classical Waring rank of a form $F$ is equal to
        $\DRk{F, \emptyset}$. The rank defined in \cite{BBS} is similar to the
        one defined above, the difference is that the authors consider only subsets $V \subsetneq S_1$ which are finite sums
    of hyperplanes through the origin. Thus we have an inequality $\DRknd \geq S(n,
    d)$.
\end{remark}

    The paper is divided into two sections, preceded by a preliminary part. In
    the first section we prove:
    \begin{thm}\label{ref:mainthm:thm}
        Let $n, d\geq 2$ be integers.
        We have equations $\DRk{2, d} = d$ and
        $\DRk{n, 2} = n$. Moreover
        \[
        \DRk{n, d} \leq \DRk{n-1, d} + \DRk{n, d-1}
        \]
        for every $n, d \geq 3$.
    \end{thm}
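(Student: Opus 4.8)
The plan is to establish the two equalities by apolarity and then to prove the recursion by a derivative/antiderivative argument in which the open rank is used precisely to keep all the linear forms off the forbidden set.

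For $\DRk{2,d}=d$, fix $F\in S_d$ in two variables essentially depending on both. The apolar ideal $F^{\perp}\subseteq S^{*}=k[y_1,y_2]$ is a complete intersection $(g_1,g_2)$ (classical for binary forms), with $\deg g_1+\deg g_2=d+2$ and $\deg g_i\geq 2$, since $F^{\perp}_1=0$ by essential dependence on both variables; in particular $g_1,g_2$ have no common zero and $\dim_k F^{\perp}_d=d$. By the apolarity lemma, writing $F=\sum_{i=1}^{d} l_i^{d}$ with the $l_i$ distinct and off $V$ amounts to choosing a squarefree $g\in F^{\perp}_d$ none of whose roots lies in the finite set $\mathbb{P}(V)$. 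Such $g$ exists: the squarefree locus is dense in $F^{\perp}_d$, and for each of the finitely many points $p\in\mathbb{P}(V)$ the forms of $F^{\perp}_d$ vanishing at $p$ form a proper subspace, since otherwise $g_1(p)=g_2(p)=0$. Hence $\DRk{F,V}\leq d$ for every $V$, so $\DRk{F}\leq d$; taking $F=x_1x_2^{\,d-1}$, whose classical rank $\DRk{F,\emptyset}$ equals $d$, gives the reverse inequality. For $\DRk{n,2}=n$, every quadric essentially depending on $n$ variables equals $Q=x_1^2+\cdots+x_n^2$ after a coordinate change; the group $O(Q)$ permutes the frames $(l_1,\dots,l_n)$ with $\sum l_i^2=Q$, and for each $i$ the $O(Q)$-orbit of $x_i$ is a nondegenerate affine quadric in $S_1$, hence not contained in the cone $V$ (its nonzero multiples are dense in $S_1$). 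A generic $g\in O(Q)$ therefore yields a frame with every $g\cdot x_i\notin V$, so $\DRk{Q,V}\leq n$; as the classical rank of $Q$ is $n$, equality holds.

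For the recursion, fix $F\in S_d$ essentially depending on $n$ variables and $V\subsetneq S_1$. For a generic choice of coordinates both $\partial F/\partial x_n\neq 0$ and $U:=\langle x_1,\dots,x_{n-1}\rangle\not\subseteq V$ hold; then $V\cup\{x_n=0\}\subsetneq S_1$ and $V\cap U\subsetneq U$ are proper homogeneous closed sets (an irreducible space is not a union of two proper closed subsets, and $U\not\subseteq V$). First I would decompose the degree-$(d-1)$ form $\partial F/\partial x_n$, using the open rank with forbidden set $V\cup\{x_n=0\}$, as $\partial F/\partial x_n=\sum_{j=1}^{q} m_j^{\,d-1}$ with $q\leq\DRk{n,d-1}$ and every $m_j\notin V$ having nonzero $x_n$-coefficient $a_j$. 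The key identity is $\partial(\mu_j^{\,d})/\partial x_n=m_j^{\,d-1}$ for $\mu_j:=(d\,a_j)^{-1/d}m_j$, which is legitimate exactly because $a_j\neq 0$; summing, $W:=F-\sum_{j=1}^{q}\mu_j^{\,d}$ has vanishing $x_n$-derivative, hence is a degree-$d$ form on $U$, and each $\mu_j\notin V$ since $V$ is a cone.

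It remains to decompose $W$ on $U$ while avoiding $V$. Using the open rank of $W$ with forbidden set $V\cap U\subsetneq U$, write $W=\sum_{i=1}^{p}\nu_i^{\,d}$ with $\nu_i\in U$, $\nu_i\notin V$, and $p\leq\DRk{n-1,d}$. Then $F=\sum_{i=1}^{p}\nu_i^{\,d}+\sum_{j=1}^{q}\mu_j^{\,d}$ is a presentation off $V$ with $p+q\leq\DRk{n-1,d}+\DRk{n,d-1}$ summands, so $\DRk{F,V}\leq\DRk{n-1,d}+\DRk{n,d-1}$; taking suprema over $V$ and over $F$ gives the claim. The conceptual point, and the reason the open rank is the right notion, is that the antiderivative step is valid only when each $m_j$ genuinely involves $x_n$, which is exactly what forbidding $\{x_n=0\}$ guarantees. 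I expect the main obstacle to be the bookkeeping that $\partial F/\partial x_n$ and $W$ may essentially depend on fewer than $n$ and $n-1$ variables respectively; I would dispose of this by the monotonicity $\DRk{m,e}\leq\DRk{n,e}$ for $m\leq n$, recorded as a short separate observation.
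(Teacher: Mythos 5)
Your two base cases are essentially sound. The binary case is the same apolarity/complete-intersection argument as in the paper (your hands-on count of the proper subspaces of $(F^{\perp})_d$ vanishing at the finitely many points of $\mathbb{P}(V)$ replaces the paper's appeal to Bertini), and your treatment of quadrics via the orthogonal group acting transitively on frames is a genuinely different, and arguably slicker, route than the paper's induction on $n$; just work with the irreducible group $SO(Q)$ (or argue component by component) before intersecting the $n$ open conditions.

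The recursion, however, has a genuine gap, and it sits exactly where you flagged it. After subtracting the antiderivatives you are left with $W=F-\sum_j\mu_j^{\,d}$, a form in $x_1,\dots,x_{n-1}$, and you invoke $\DRk{n-1,d}$ for it; but $W$ may essentially depend on $m<n-1$ variables, and the monotonicity $\DRk{m,d}\leq\DRk{n-1,d}$ you propose does not repair this. The quantity $\DRk{m,d}$ only bounds $\DRk{W,V'}$ when the forbidden set $V'$ meets the span $U_0$ of the essential variables of $W$ in a \emph{proper} subset of $U_0$; if $V\cap U\supseteq U_0$ (which you cannot exclude, since $V$ is arbitrary and $U_0$ depends on the chosen decomposition of $\partial F/\partial x_n$), every admissible presentation of $W$ must use linear forms sticking out of $U_0$, and the number required can exceed $\DRk{m,d}$ and even $\DRk{n-1,d}$. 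Concretely, in $k[x_1,x_2]$ one has $\DRk{x_1^d,\,kx_1}=d+1$, although $\DRk{2,d}=d$ and $\DRk{1,d}=1$: by apolarity any presentation of $x_1^d$ by at most $d$ powers of pairwise non-proportional forms must include a multiple of $x_1$, since the degree-$r$ part of $(x_1^d)^{\perp}$ is $\partial_2\cdot S^{*}_{r-1}$ for $r\leq d$. So the statement you would need is false, not merely unproven. (The same issue threatens your first step: you only require $\partial F/\partial x_n\neq 0$, whereas the bound $q\leq\DRk{n,d-1}$ needs $\partial F/\partial x_n$ to essentially depend on all $n$ variables; the paper secures this by a genericity claim for $n,d\geq 3$.) The paper's fix for the remainder is the missing idea: it subtracts only a \emph{minimal} subset $T$ of the antiderivatives for which the remainder $F_2$ is killed by some $\beta\in S^{*}_1$ with $V(\beta)\not\subset V$, and shows by a dimension count (if $(F_2^{\perp})_1$ were at least two-dimensional it would meet $(l_i^{\,d})^{\perp}_1$ nontrivially, allowing one to drop an index) that this minimal remainder essentially depends on exactly $n-1$ variables, so that $\DRk{n-1,d}$ applies legitimately. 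You would need this, or an equivalent device, for the recursion to close.
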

    The proof is a copy of the proof of
    \eqref{eqref:inductivestep:eqn} and \eqref{eqref:basecases} from \cite{BBS}. Unfortunately the proof
    given there is, formally, just a special case of the proof required and
    moreover Bia\l{}ynicki-Birula and Schinzel are also concerned with
    non-homogeneous polynomials which makes their proof more
    complicated.

    In the second part we prove the following theorem, with an
    immediate corollary bounding the open Waring rank:
    \begin{thm}\label{ref:ineqthm:thm}
        $\DRk{3, 3} = 5$.
    \end{thm}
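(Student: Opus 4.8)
The plan is to work entirely through apolarity. By Macaulay duality, a presentation $F = \sum_{i=1}^m l_i^3$ with the $l_i$ pairwise independent corresponds to a reduced zero-dimensional subscheme $Z \subseteq \mathbb{P}(S_1^{*}) = \mathbb{P}^2$ with $|Z| = m$ and $I_Z \subseteq F^\perp$; the requirement $l_i \notin V$ translates into $Z \cap \tilde V = \emptyset$, where $\tilde V \subsetneq \mathbb{P}^2$ is the projectivisation of $V$. Thus $\DRk{F,V} \le m$ precisely when such a $Z$ exists, and the upper bound amounts to producing, for every $F$ essentially depending on three variables and every proper closed $\tilde V$, such a $Z$ with $|Z| \le 5$, while the lower bound only requires one $F$ with $\DRkF \ge 5$. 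For the latter it suffices to exhibit a ternary cubic of classical Waring rank $5$: such forms exist, since the maximal rank of a plane cubic equals $5$, and because every binary cubic has rank at most $\DRk{2,3} = 3$ any such $F$ automatically essentially depends on three variables, whence $\DRkF \ge \DRk{F,\emptyset} = 5$.

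For the upper bound, I would first record that $S/F^\perp$ is Artinian Gorenstein of socle degree $3$, so by symmetry its Hilbert function is $(1,3,3,1)$, the middle value being $3$ exactly because $F$ uses three variables, i.e. $(F^\perp)_1 = 0$. In particular $W := (F^\perp)_2$ is a three-dimensional space of conics, a net in $\mathbb{P}^2$. The key step is a reduction to one variable. I would show that $W$ contains a smooth conic $C = \{q = 0\} \cong \mathbb{P}^1$ not contained in $\tilde V$, and then restrict: writing $R = S/(q)$ for the coordinate ring of $C$ and $J$ for the image of $F^\perp$ in $R$, one has $R/J = S/F^\perp$, so $J$ has the codimensions forced by $(1,3,3,1)$, and under the isomorphism $R_m \cong k[u,v]_{2m}$ the ideal $J$ becomes the apolar ideal of the binary sextic $G := F|_C$, the pullback of $F$ along the degree-two Veronese parametrisation of $C$. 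Reduced apolar subschemes of $F$ supported on $C$ then correspond to reduced subschemes of $\mathbb{P}^1$ apolar to $G$, with $\tilde V \cap C$ a finite subset of $\mathbb{P}^1$.

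It then remains to decompose $G$ into at most five distinct powers whose roots avoid this finite set. Here I would invoke Sylvester's theory: $G^\perp = (\alpha,\beta)$ is a complete intersection with $\deg\alpha \le \deg\beta$ and $\deg\alpha + \deg\beta = 8$, and a reduced apolar set of $r$ points is the same as a squarefree degree-$r$ form in $G^\perp$. Since $\deg\alpha \le 4$, the linear system $(G^\perp)_5$ is base-point-free unless $G$ has very low rank; a general member is then squarefree with five distinct roots, which can be positioned to avoid the finite set $\tilde V \cap C$, producing $Z$ of length at most $5$ disjoint from $\tilde V$. The value $5$ rather than $4$ is exactly what preserves enough freedom to dodge $\tilde V$, matching the lower bound.

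The main obstacle is the reduction step: securing a usable smooth apolar conic $C$, one not forced inside $\tilde V$ and with $G = F|_C$ not too degenerate, uniformly over all $F$. This is where the case analysis concentrates and where I expect to adopt the ideas of Kleppe's thesis in the good-forms lemma (Lemma \ref{goodforms:lem}). One must handle nets all of whose conics are singular (for example nets with a fixed base line or of rank at most two), nets whose smooth members all lie in $\tilde V$, and cubics for which the catalecticant of $G$ drops rank so that $(G^\perp)_5$ acquires a base point sitting in $\tilde V$. In each such degenerate stratum I would either select a different conic in the net, move within the two-dimensional family of sections of $W$, or argue directly from an explicit normal form of $F$, in every case exhibiting a reduced apolar $Z$ of length at most $5$ missing $\tilde V$. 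Combining the two inequalities gives $\DRk{3,3} = 5$.
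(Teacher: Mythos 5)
Your lower bound is fine: a ternary cubic of classical Waring rank $5$ exists, any such form necessarily depends essentially on three variables (binary cubics have rank at most $3$), and $\DRk{F}\geq \DRk{F,\emptyset}$; the paper gets the same bound from Kleppe's normal form $x_0x_1^2+x_1x_2^2$. The upper-bound strategy, however, has a genuine gap, and it sits exactly where you place ``the main obstacle''. Two concrete points. First, the existence of a smooth conic in the net $(F^\perp)_2$ not contained in $\tilde V$ is true for forms essentially depending on three variables, but it is not automatic: a net of everywhere-singular conics is a plane in the symmetric determinantal cubic, and one must check that neither type of such plane can sit inside $F^\perp$ without forcing $(F^\perp)_1\neq 0$. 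Second, and more seriously, the Sylvester step can fail outright on a given conic. If the restricted sextic $G$ has $G^\perp=(\alpha,\beta)$ with $\deg\alpha=2$, then $(G^\perp)_5=\alpha\cdot k[u,v]_3$, so either (a) $\alpha$ is a square, in which case $G$ is equivalent to $u^5v$, a sextic of rank $6$ (the \emph{maximum} for sextics, not ``very low rank'' as you write), and $G^\perp$ contains no squarefree quintic at all, hence no reduced apolar scheme of length $5$ on that conic; or (b) $\alpha$ is squarefree and its two roots are base points of $(G^\perp)_5$, so if they happen to lie in $\tilde V\cap C$ every length-$5$ reduced apolar scheme on $C$ meets $\tilde V$. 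In both cases you must move to another conic of the net, and the assertion that some other smooth member always repairs this is precisely the content of the theorem; it is asserted, not proved. Since the open rank is defined against an adversarially chosen $V$, these strata cannot be dismissed as non-generic.

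For comparison, the paper avoids the single-conic reduction entirely. When the net $(F^\perp)_2$ is base-point-free it applies Bertini twice to choose two members $D_0,D_1$ meeting transversally in four points off $\tilde V$, and Lemmas \ref{ref:containment:lem} and \ref{ref:unmixedness:lem} give $\DRk{F,V}\leq 4$ directly — note this yields $4$, not $5$, for these forms. The forms whose net has a base point form a closed subset of the essentially-three-variable locus (Corollary \ref{closed:cor}) and are classified explicitly by Kleppe as $x_0x_1^2+x_1x_2^2$ or $x_0^3+g(x_1,x_2)$; for these the paper perturbs, choosing $l\notin V$ with $F+l^3$ in the good open locus, which costs one extra cube and gives $\DRk{F,V}\leq 5$. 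If you want to complete your route, you will need either an analogous openness-plus-normal-form argument to control the bad strata, or a direct verification for Kleppe's normal forms that a smooth apolar conic with a well-behaved restricted sextic exists and can be chosen to avoid the obstructions imposed by $\tilde V$.
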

    \begin{cor}\label{ref:improvedbound:cor}
        Let $n, d\geq 3$ be integers, then
        \[
        \DRk{n, d} \leq \binom{n+d-2}{d-1} - \binom{n+d-6}{d-3}.
        \]
    \end{cor}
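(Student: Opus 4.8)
The plan is to deduce the bound directly from the two theorems by an induction on $n+d$, where Theorem~\ref{ref:mainthm:thm} supplies the recursion and the base cases along $n=2$ and $d=2$, while Theorem~\ref{ref:ineqthm:thm} upgrades the single corner value at $(3,3)$ from $6$ to $5$. Write $B(n,d) = \binom{n+d-2}{d-1}$ for the bound already obtained from Theorem~\ref{ref:mainthm:thm} alone, $T(n,d) = \binom{n+d-6}{d-3}$ for the claimed saving, and $C(n,d) = B(n,d) - T(n,d)$ for the target bound, using throughout the convention $\binom{m}{k}=0$ when $0\le m<k$. I would first record that $C(3,3) = 6 - 1 = 5 = \DRk{3,3}$ by Theorem~\ref{ref:ineqthm:thm}, so the claim holds at the minimal point of the region $n,d\ge 3$.

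The key elementary observation is that \emph{both} $B$ and $T$ satisfy Pascal's identity, hence so does $C$. Indeed
\[
B(n-1,d) + B(n,d-1) = \binom{n+d-3}{d-1} + \binom{n+d-3}{d-2} = \binom{n+d-2}{d-1} = B(n,d),
\]
and likewise
\[
T(n-1,d) + T(n,d-1) = \binom{n+d-7}{d-3} + \binom{n+d-7}{d-4} = \binom{n+d-6}{d-3} = T(n,d),
\]
so that $C(n-1,d) + C(n,d-1) = C(n,d)$ for all $n,d\ge 3$. Thus the correction term propagates through the recursion with exactly the same Pascal bookkeeping as the main term, which is the structural reason why one lost unit at $(3,3)$ spreads out to $\binom{n+d-6}{d-3}$.

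It remains to run the induction on $n+d$ over the region $n,d\ge 3$. With $(3,3)$ as base, suppose $n+d>6$. If $n,d\ge 4$ then $(n-1,d)$ and $(n,d-1)$ both lie in the region, so Theorem~\ref{ref:mainthm:thm} and the inductive hypothesis give $\DRk{n,d}\le C(n-1,d)+C(n,d-1)=C(n,d)$. If $n=3$ and $d\ge 4$, then $\DRk{3,d}\le \DRk{2,d}+\DRk{3,d-1}$; here $\DRk{2,d}=d$ by Theorem~\ref{ref:mainthm:thm}, which coincides with $C(2,d)$ since $T(2,d)=\binom{d-4}{d-3}=0$, and $\DRk{3,d-1}\le C(3,d-1)$ by induction, so the Pascal relation again yields $\DRk{3,d}\le C(3,d)$. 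The case $n\ge 4$, $d=3$ is identical after noting the $n\leftrightarrow d$ symmetry of $B$ and $T$ together with the base case $\DRk{n,2}=n$. Since every reduction either stays within the region or exits through the exact boundary values of Theorem~\ref{ref:mainthm:thm}, the induction closes. There is no genuine obstacle here beyond keeping track of the binomial conventions at the edges $d=3$ and $n=3$, where the saving term equals $\binom{n-3}{0}=1$ rather than $0$; the real content of the corollary lies entirely in Theorems~\ref{ref:mainthm:thm} and~\ref{ref:ineqthm:thm}.
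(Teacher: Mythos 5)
Your proof is correct and is exactly the argument the paper intends: the paper gives no explicit proof of this corollary, calling it an ``immediate'' consequence of Theorems~\ref{ref:mainthm:thm} and~\ref{ref:ineqthm:thm}, and the intended route is precisely your double induction via the recursion $\DRk{n,d}\le\DRk{n-1,d}+\DRk{n,d-1}$, Pascal's identity for both binomial terms, the boundary values $\DRk{2,d}=d$ and $\DRk{n,2}=n$, and the improved corner value $\DRk{3,3}=5$. Your bookkeeping at the edges (where the correction term is $0$ on the lines $n=2$, $d=2$ and $1$ on the lines $n=3$, $d=3$) is accurate.
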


    \section{Preliminaries}

    Let us recall a well known lemma, whose proof can be found e.g. in
    \cite{Kl}
    \begin{lem}\label{ref:containment:lem}
        If $F\in S_d$ and $\bigcap_{i=1}^k \mathfrak{m}_i \subseteq F^{\perp}$, where
        $\mathfrak{m}_i$ are homogeneous ideals of distinct points
        $[l_i]\in\mathbb{P}S_1$, then
        \[
        F\in \lin\left(l_1^d, l_2^d, \dots, l_k^d\right).
        \]
    \end{lem}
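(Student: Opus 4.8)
The plan is to reduce everything to linear algebra in degree $d$, using the perfect apolarity pairing between $S^*_d$ and $S_d$. First I would observe that for $g\in S^*_d$ and $H\in S_d$ the contraction $g\hook H$ lies in $S_0=k$, so that $\ip{g}{H}:=g\hook H$ defines a bilinear pairing $S^*_d\times S_d\to k$. In the dual monomial bases this pairing is diagonal with entries $\alpha!$, which are nonzero since $\operatorname{char}k=0$; hence it is perfect. Consequently, for every linear subspace $U\subseteq S_d$ the orthogonal complement $U^{\perp}:=\{g\in S^*_d\ |\ g\hook u=0\text{ for all }u\in U\}$ satisfies $\dim U+\dim U^{\perp}=\dim S_d$ and $(U^{\perp})^{\perp}=U$.

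Next I would record the contraction of a power. Writing $l_i=\sum_j a_{ij}x_j$, iterated differentiation gives $g\hook l_i^d=d!\,g(a_{i1},\dots,a_{in})$ for every $g\in S^*_d$. Thus $g$ kills $l_i^d$ precisely when $g$ vanishes at the point $[l_i]\in\mathbb{P}S_1$, that is, when $g\in(\mathfrak{m}_i)_d$. Setting $W:=\lin(l_1^d,\dots,l_k^d)\subseteq S_d$, this computes its orthogonal complement as
\[
W^{\perp}=\bigcap_{i=1}^k(\mathfrak{m}_i)_d=\Bigl(\bigcap_{i=1}^k\mathfrak{m}_i\Bigr)_d.
\]

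To finish, I would restrict the hypothesis to degree $d$. The inclusion $\bigcap_i\mathfrak{m}_i\subseteq F^{\perp}$ gives in degree $d$ the inclusion $W^{\perp}=(\bigcap_i\mathfrak{m}_i)_d\subseteq(F^{\perp})_d=\lin(F)^{\perp}$. Applying $(-)^{\perp}$, which reverses inclusions, and using the double-complement identity $(U^{\perp})^{\perp}=U$, this yields $\lin(F)\subseteq W$, i.e. $F\in\lin(l_1^d,\dots,l_k^d)$, as claimed.

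The only real content lies in the perfectness of the pairing, which is immediate in characteristic $0$, and the contraction formula $g\hook l^d=d!\,g(a)$; both are routine. I expect the main point to watch to be bookkeeping between the two meanings of $\perp$ — the apolar ideal $F^{\perp}$ and the degree-$d$ orthogonal complement — which agree in degree $d$ through $(F^{\perp})_d=\lin(F)^{\perp}$. Note that the distinctness of the points $[l_i]$ is not actually needed beyond guaranteeing that the $\mathfrak{m}_i$ are ideals of honest points; the argument is purely the degree-$d$ linear algebra above, and in fact uses only the degree-$d$ part of the hypothesis.
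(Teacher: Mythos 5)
Your proof is correct and complete: the perfectness of the degree-$d$ contraction pairing in characteristic $0$, the identity $g\hook l^d = d!\,g(a)$, and the resulting computation $\lin(l_1^d,\dots,l_k^d)^{\perp}=\bigl(\bigcap_i\mathfrak{m}_i\bigr)_d$ together give exactly the claimed containment by taking orthogonal complements. The paper does not prove this lemma itself but cites Kleppe's thesis, and your argument is precisely the standard apolarity/duality proof found there, so there is nothing to add; your closing observation that distinctness of the $[l_i]$ is not needed is also accurate.
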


    This lemma will be used together with a special case of unmixedness property of
    complete intersections in $\mathbb{P}^2$:
    \begin{lem}\label{ref:unmixedness:lem}
        If two homogeneous forms $G, H$ intersect transversally in points
        $\{a_1,\dots,a_k\} \subseteq \mathbb{P}^2$, then $\bigcap_i
        \mathfrak{m}_{a_i} = (G, H)$.
    \end{lem}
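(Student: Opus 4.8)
The plan is to prove the two ideals are equal by comparing them at each point, using that a complete intersection has no embedded components. Write $I = (G, H)$ and $J = \bigcap_i \mathfrak{m}_{a_i}$. Since $G$ and $H$ vanish at every $a_i$ we have the trivial inclusion $I \subseteq J$, and since $J$ is radical with zero locus $\{a_1, \dots, a_k\}$ it is exactly the radical of $I$. The entire content of the lemma is therefore that $I$ is already radical, i.e. that the complete intersection scheme $\Proj S/I$ is reduced.

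First I would record that $G$ and $H$ form a regular sequence in $S = k[x_1,x_2,x_3]$: they can share no common factor, since otherwise $V(G)\cap V(H)$ would contain a curve rather than the finite set $\{a_1,\dots,a_k\}$. Hence $I$ is a complete intersection ideal, $S/I$ is Cohen--Macaulay of dimension $1$, and its Koszul resolution shows $\Supp(S/I) = \{a_1, \dots, a_k\}$. Macaulay's unmixedness theorem (the special case being quoted) then gives that $S/I$ has no embedded primes, so $\Ass(S/I) = \{\mathfrak{m}_{a_1}, \dots, \mathfrak{m}_{a_k}\}$ and $I$ admits an irredundant primary decomposition $I = \bigcap_i Q_i$ with each $Q_i$ being $\mathfrak{m}_{a_i}$-primary; in particular $I$ is saturated.

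It remains to identify each primary component, and here is where transversality enters. At $a_i$ the curves $V(G)$ and $V(H)$ are smooth with distinct tangent lines, equivalently the Jacobian of $(G,H)$ has rank $2$ at $a_i$, so the dehomogenised local equations of $G$ and $H$ form a regular system of parameters of the local ring $\mathcal{O}_{\mathbb{P}^2, a_i}$. Consequently the local intersection multiplicity is $1$, that is $\mathcal{O}_{\mathbb{P}^2, a_i}/(G, H)$ has length $1$. Localising $I \subseteq J$ at $\mathfrak{m}_{a_i}$ then compares two ideals of colength $1$ in the same local ring, forcing $I S_{\mathfrak{m}_{a_i}} = \mathfrak{m}_{a_i} S_{\mathfrak{m}_{a_i}}$, so $Q_i = \mathfrak{m}_{a_i}$. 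Intersecting over $i$ yields $I = \bigcap_i \mathfrak{m}_{a_i} = J$.

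I expect the main obstacle to be the careful bookkeeping in the last step: precisely translating ``transversal'' into the statement that $\mathcal{O}_{\mathbb{P}^2,a_i}/(G,H)$ has length one, and invoking unmixedness in exactly the form that rules out both embedded points at the $a_i$ and a contribution of the irrelevant ideal, so that the local equalities glue to a global equality of ideals. Everything else --- the inclusion $I \subseteq J$ and the regular-sequence observation --- is formal.
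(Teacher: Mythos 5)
Your argument is correct: the reduction to showing $(G,H)$ is radical and saturated, the regular-sequence observation, Macaulay's unmixedness theorem to exclude embedded primes and the irrelevant ideal, and the local multiplicity-one computation at each transversal intersection point together give a complete proof. The paper offers no proof of this lemma at all --- it is stated as a known special case of the unmixedness property of complete intersections in $\mathbb{P}^2$ --- so your write-up is precisely the standard expansion of the intended argument rather than a different route.
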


    Another lemma, whose proof can be found in \cite{Kl}, is concerned with
    linear systems obtained from the apolar ideal of a form:
    \begin{lem}\label{ref:badispower:lem}
        Let $F$ be a form in $S_d$. Choose $e\leq d$ and consider $\mathcal{L} = (F^{\perp})_e$ as a
        linear system on $\mathbb{P}S_1$.
        A point $[l]\in \mathbb{P}S_1$ is a base point of $\mathcal{L}$ if and
        only if there exists a differential $\partial\in S^{*}_{d-e}$ such
        that $\partial F = l^{e}$.
    \end{lem}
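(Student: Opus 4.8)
The plan is to convert the projective condition ``$[l]$ is a base point of $\mathcal L = (F^{\perp})_e$'' into a statement inside the apolarity pairing, and then to identify the relevant subspace of $S_e$ with the image of a catalecticant map by a standard transpose duality. I regard $S^{*}_e$ as the space $H^0(\mathbb{P}S_1, \mathcal{O}(e))$ of sections of $\mathcal{O}(e)$ on $\mathbb{P}S_1$, so that each $\partial\in S^{*}_e$ has a well-defined vanishing locus; for $l\in S_1$, the value of the section $\partial$ at $[l]$ is the evaluation $\partial(l)$ of the form $\partial$ at the coordinate vector of $l$ (its vanishing being independent of the chosen representative of $[l]$). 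The first step is the bridge identity $\partial\hook l^{e}=e!\,\partial(l)$, valid for all $\partial\in S^{*}_e$ (check it on monomials $\partial^{\alpha}$ and extend by linearity). Because $k$ has characteristic zero, $e!$ is invertible, so the section $\partial$ vanishes at $[l]$ if and only if $\partial\hook l^{e}=0$. Writing $\ip{\partial}{l^{e}}$ for $\partial\hook l^{e}\in S_0=k$, this shows that $[l]$ is a base point of $\mathcal L$ if and only if $\ip{\partial}{l^{e}}=0$ for every $\partial\in(F^{\perp})_e$, i.e. if and only if $l^{e}$ lies in the orthogonal complement $\bigl((F^{\perp})_e\bigr)^{\perp}\subseteq S_e$ with respect to the apolarity pairing.

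The second step is pure linear algebra. Consider the two catalecticant maps $C\colon S^{*}_{d-e}\to S_e$, $\partial\mapsto\partial\hook F$, and $\tilde C\colon S^{*}_e\to S_{d-e}$, $\partial\mapsto\partial\hook F$; by definition $\ker\tilde C=(F^{\perp})_e$, and $\im C=S^{*}_{d-e}\hook F$ is exactly the set appearing on the right-hand side of the lemma. Associativity of the action together with commutativity of $S^{*}$ yields, for $\partial\in S^{*}_e$ and $\partial'\in S^{*}_{d-e}$,
\[
\ip{\partial}{\partial'\hook F}=(\partial\partial')\hook F=\ip{\partial'}{\partial\hook F},
\]
so $C$ and $\tilde C$ are mutually transpose under the nondegenerate apolarity pairings. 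The standard identity $\im(T^{\vee})=(\ker T)^{\perp}$ applied to $T=\tilde C$ then gives $\im C=\bigl((F^{\perp})_e\bigr)^{\perp}$.

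Combining the two steps proves the lemma: $[l]$ is a base point of $\mathcal L$ iff $l^{e}\in\bigl((F^{\perp})_e\bigr)^{\perp}=\im C$ iff there is $\partial\in S^{*}_{d-e}$ with $\partial\hook F=l^{e}$. If one wishes to avoid the transpose formalism, the equality $\im C=\bigl((F^{\perp})_e\bigr)^{\perp}$ can be obtained by a dimension count instead: the displayed adjunction shows directly that $\im C\subseteq\bigl((F^{\perp})_e\bigr)^{\perp}$ (this is the elementary ``if'' direction of the lemma), while $\rk C=\rk\tilde C$ forces $\dim\im C=\dim S_e-\dim(F^{\perp})_e=\dim\bigl((F^{\perp})_e\bigr)^{\perp}$, so the inclusion is an equality. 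I expect the only delicate point to be the first step — fixing the normalization in the bridge identity and thereby using characteristic zero to pass between ``$\partial$ vanishes at $[l]$'' and ``$\partial\hook l^{e}=0$''; the ``only if'' implication of the lemma is where the duality (equivalently, the rank count) is genuinely needed, since realizing $l^{e}$ as $\partial\hook F$ from mere membership in the orthogonal complement is not a formal consequence.
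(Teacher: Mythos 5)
Your proof is correct and is essentially the paper's own argument, written out in full: the paper's sketch compresses your two steps into the chain ``$[l]$ is a base point iff $(\mathfrak{m}_l)_e \supseteq (F^{\perp})_e = \bigcap\{(\partial F)^{\perp}_e\}$ iff $l^e = \partial F$ for some $\partial$'', which is exactly your identification $(\mathfrak{m}_l)_e = (l^e)^{\perp}_e$ (your bridge identity) combined with the catalecticant transpose duality $\im C = \bigl(\ker \tilde C\bigr)^{\perp}$. Your version makes explicit the normalization $\partial\hook l^e = e!\,\partial(l)$ and the rank count that the paper leaves implicit, but the underlying mechanism is the same.
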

    \begin{proof}[Sketch of proof]
        Fix a point $[l]\in \mathbb{P}S_1$ with homogeneous ideal
        $\mathfrak{m}_l$.
        The point $[l]$ is a base point of $\mathcal{L}$ iff $(\mathfrak{m}_l)_{e}
        \supseteq (F^{\perp})_{e} = \bigcap\left\{(\partial F)^{\perp}_e \ |\ \partial \in
        S^{*}_{d-e}\right\}$ iff there exists $\partial \in
        S^{*}_{d-e}$ such that $l^e = \partial  F$.
    \end{proof}

    \begin{cor}\label{ref:badareclosed:lem}
        Fix $d \geq e\geq 1$. Denote by $Ess_{d,e}$ the set of forms $F\in S_d$ such
        that no nonzero element of $S_{e}^{*}$ annihilates $F$.
        The set of $F\in Ess_{d,e}$ such that
        $(F^{\perp})_{d-e}$ has a base
        point in $\mathbb{P}S_1$ is closed in $Ess_{d, e}$.
    \end{cor}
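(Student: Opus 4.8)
The plan is to rephrase the base-point condition as membership in the image of a contraction map, package this as an incidence variety over $\mathbb{P}S_1$, and then eliminate the projective parameter by a completeness (closed-map) argument. Applying Lemma~\ref{ref:badispower:lem} with $d-e$ in place of $e$, a point $[l]\in\mathbb{P}S_1$ is a base point of $(F^{\perp})_{d-e}$ if and only if there is a differential $\partial\in S^*_e$ with $\partial\hook F = l^{d-e}$. Writing $\mu_F\colon S^*_e\to S_{d-e}$ for the contraction map $\partial\mapsto\partial\hook F$, this says exactly $l^{d-e}\in\im\mu_F$. Hence the set we must show is closed is the image, under the projection $\pi\colon Ess_{d,e}\times\mathbb{P}S_1\to Ess_{d,e}$, of
\[
Z=\{(F,[l])\in Ess_{d,e}\times\mathbb{P}S_1 \mid l^{d-e}\in\im\mu_F\}.
\]
Since $\im\mu_F$ is a linear subspace, the condition $l^{d-e}\in\im\mu_F$ is invariant under rescaling $l$, so $Z$ is well defined as a subset of the product with $\mathbb{P}S_1$.

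Next I would show that $Z$ is closed in $Ess_{d,e}\times\mathbb{P}S_1$. The decisive point is that on $Ess_{d,e}$ the map $\mu_F$ is \emph{injective}: this is precisely the defining property of $Ess_{d,e}$, that no nonzero $\partial\in S^*_e$ kills $F$, i.e. $\ker\mu_F=0$. Consequently $\rk\mu_F=\dim S^*_e=:r$ is constant on $Ess_{d,e}$. Consider the matrix $M(F,l)$ of the linear map $S^*_e\oplus k\to S_{d-e}$, $(\partial,c)\mapsto\partial\hook F+c\,l^{d-e}$, in fixed monomial bases; its entries are linear in the coefficients of $F$ and homogeneous of degree $d-e$ in $l$. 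Because the first $r$ columns already span an $r$-dimensional space, we have $l^{d-e}\in\im\mu_F$ if and only if $\rk M(F,l)\leq r$, that is, if and only if all $(r+1)\times(r+1)$ minors of $M(F,l)$ vanish. These minors are polynomials in the coefficients of $F$, homogeneous in $l$, so their common zero locus is closed in $Ess_{d,e}\times\mathbb{P}S_1$ and equals $Z$.

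Finally, since $\mathbb{P}S_1$ is complete, the projection $\pi\colon Ess_{d,e}\times\mathbb{P}S_1\to Ess_{d,e}$ is a closed map, so $\pi(Z)$ is closed in $Ess_{d,e}$; by the reformulation of the first paragraph this is exactly the set of $F\in Ess_{d,e}$ for which $(F^{\perp})_{d-e}$ has a base point, which is what we wanted.

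The step I expect to require the most care is the constancy of $\rk\mu_F$. If the rank of $\mu_F$ were allowed to drop, the inequality $\rk M(F,l)\leq r$ would be strictly weaker than $l^{d-e}\in\im\mu_F$: a smaller image would force the augmented rank below $r$ for spurious $l$, so the determinantal equations would no longer cut out $Z$ and the closedness argument would break. Restricting to $Ess_{d,e}$, where injectivity of $\mu_F$ pins $\rk\mu_F=r$ at every point, is exactly what makes the minor equations capture membership in $\im\mu_F$. The remaining ingredients are formal: the incidence reformulation supplied by Lemma~\ref{ref:badispower:lem}, and the elimination of $[l]$ via completeness of $\mathbb{P}S_1$.
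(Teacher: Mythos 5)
Your proof is correct. It follows the same broad strategy as the paper --- reformulate the base-point condition via Lemma~\ref{ref:badispower:lem}, form an incidence variety, and project along a complete projective factor --- but you eliminate the differential operator $\partial$ in a genuinely different way. The paper keeps $\partial$ as a variable, working with the incidence variety in $Ess_{d,e}\times\mathbb{P}S^*_e\times\mathbb{P}S_1$ cut out by the condition that $l^{d-e}$ and $\partial\hook F$ be linearly dependent, and then must check that projecting away $[\partial]$ introduces no spurious points where one of the two scalars in the dependence relation vanishes; this is where the hypothesis $F\in Ess_{d,e}$ (so that $\partial\hook F\neq 0$) enters. You instead eliminate $\partial$ algebraically before projecting: the condition $l^{d-e}\in\im\mu_F$ becomes the vanishing of the $(r+1)\times(r+1)$ minors of the augmented catalecticant matrix, and this determinantal translation is legitimate precisely because $\mu_F$ has constant rank $r=\dim S^*_e$ on $Ess_{d,e}$ --- the same hypothesis, used at a different point. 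Your version makes the defining equations of the incidence variety fully explicit and needs only one complete factor to project away; the paper's is slightly quicker to set up. You rightly flag the constancy of $\rk\mu_F$ as the crux; the one degenerate case you leave implicit (if $\dim S_{d-e}=r$ there are no $(r+1)\times(r+1)$ minors, but then the injective $\mu_F$ is onto, every $[l]$ satisfies the condition, and the vacuous system still cuts out the correct, closed set) causes no harm.
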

    \begin{proof}
        Note that $Ess_{d,e}$ is Zariski open in $S_d$. Denote by $W$ the subset of forms $F\in Ess_{d,e}$ such that
        $(F^{\perp})_{d-e}$ has a base point in $\mathbb{P}S_1$.
        Consider the closed subvariety
        \[
        \left\{ (F, [\partial], [l])\in Ess_{d, e} \times
        \mathbb{P}S^{*}_{e} \times \mathbb{P}S_1\ |\ l^{d-e}\mbox{ and }\partial F\mbox{ are linearly dependent}
        \right\}.
        \]
        The projection to the first coordinate gives the set of forms $F\in
        Ess_{d, e}$
        such that there exist $\partial\in S^*_e,\ l\in S_1$ and $\lambda,
        \lambda'\in k$, not both equal zero, satisfying $\lambda
        l^{d-e} = \lambda' \partial F$. As $l^{d-e}\neq 0$ and $\partial F\neq
        0$ from the definition of $Ess_{d, e}$ we have $\lambda\lambda'\neq
        0$, which is equivalent, by Lemma \ref{ref:badispower:lem}, to $F\in W$.
    \end{proof}

    \section{Proof of Theorem \ref{ref:mainthm:thm}}

    The proof will be divided into three independent lemmas.

    \begin{lem}
        Let $d\geq 2$ be an integer, then $\DRk{2, d} = d$.
    \end{lem}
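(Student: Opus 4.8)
The plan is to prove the two inequalities $\DRk{2,d}\le d$ and $\DRk{2,d}\ge d$ separately, working throughout in $S=k[x_1,x_2]$, so that $\mathbb{P}S_1\cong\mathbb{P}^1$ and the dual ring is $S^*=k[y_1,y_2]$. For the upper bound, fix a form $F\in S_d$ that essentially depends on both variables together with a proper homogeneous closed $V\subsetneq S_1$; the goal is to exhibit $d$ linear forms off $V$ whose $d$-th powers span a space containing $F$. The engine is the linear system $\mathcal{L}=(F^{\perp})_d$ on $\mathbb{P}^1$. First I would record that $S^*/F^{\perp}$ is Artinian Gorenstein with socle in degree $d$, so $\dim_k(S^*/F^{\perp})_d=1$ and hence $\dim_k\mathcal{L}=\dim_k S^*_d-1=d$. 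Next, applying Lemma \ref{ref:badispower:lem} with $e=d$, a point $[l]$ is a base point of $\mathcal{L}$ exactly when $F$ is a scalar multiple of $l^d$; since $F$ essentially depends on two variables this never happens, so $\mathcal{L}$ is base-point free.

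With base-point freeness in hand the geometry becomes transparent. The forbidden set $V$ is a proper closed cone in $S_1\cong\mathbb{A}^2$, so $\mathbb{P}V\subseteq\mathbb{P}^1$ is a finite set of points; for each such $[q]$ the members of $\mathcal{L}$ vanishing at $[q]$ form a proper linear subspace of $\mathcal{L}$, proper precisely because $[q]$ is not a base point. On the other hand, because $\operatorname{char}k=0$, Bertini's theorem guarantees that a general member of the base-point-free system $\mathcal{L}$ is reduced, i.e. is a product of $d$ distinct linear forms; equivalently the discriminant is a proper closed subset of $\mathbb{P}\mathcal{L}\cong\mathbb{P}^{d-1}$. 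Since $\mathbb{P}\mathcal{L}$ is irreducible and only finitely many proper closed subsets have been removed, there is a member $h\in\mathcal{L}$ with $d$ distinct roots $[l_1],\dots,[l_d]$, none on $V$. On $\mathbb{P}^1$ the ideal of these reduced points is the principal ideal $(h)$, so $\bigcap_i\mathfrak{m}_{l_i}=(h)\subseteq F^{\perp}$, and Lemma \ref{ref:containment:lem} gives $F\in\lin(l_1^d,\dots,l_d^d)$. Absorbing the nonzero coefficients into the $l_i$ (legitimate since $k$ is algebraically closed and $V$ is a cone) yields $F=\sum_{i=1}^d l_i^d$ with every $l_i\notin V$, hence $\DRk{F,V}\le d$. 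As $V$ is arbitrary this gives $\DRkF\le d$ for every admissible $F$, so $\DRk{2,d}\le d$.

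For the lower bound it suffices to produce one admissible form $F$ with $\DRkF\ge d$, and since $\DRkF\ge\DRk{F,\emptyset}$ equals the classical Waring rank, I would take the monomial $F=x_1x_2^{d-1}$, which essentially depends on both variables. A direct computation gives $F^{\perp}=(y_1^2,y_2^d)$. If $F$ were a sum of $k<d$ powers of distinct linear forms, then the degree-$k$ product $g$ of the linear forms vanishing at the corresponding $k$ points of $\mathbb{P}^1$ would satisfy $g\hook F=\sum_i c\,g(l_i)\,l_i^{d-k}=0$, so $g$ would be a squarefree element of $(F^{\perp})_k$; but $(F^{\perp})_1=0$ and for $2\le k\le d-1$ every nonzero element of $(F^{\perp})_k=y_1^2\,S^*_{k-2}$ is divisible by $y_1^2$ and hence not squarefree. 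This contradiction shows the classical rank of $F$ is $d$, giving $\DRk{2,d}\ge d$ and completing the proof.

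The step I expect to be the main obstacle is the middle one: showing that inside the $d$-dimensional system $\mathcal{L}=(F^{\perp})_d$ one can \emph{simultaneously} force reducedness and avoidance of the finitely many forbidden directions. Base-point freeness, supplied by Lemma \ref{ref:badispower:lem}, is what turns each avoidance requirement into a proper hyperplane in $\mathbb{P}\mathcal{L}$, while the characteristic-zero hypothesis is exactly what guarantees that the generic member is reduced rather than everywhere non-reduced. Once both conditions are seen to cut out proper closed subsets of the irreducible $\mathbb{P}\mathcal{L}$, their union cannot be everything, the complement is nonempty, and the remainder is routine apolarity bookkeeping.
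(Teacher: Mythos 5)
Your proof is correct and follows essentially the same route as the paper: show $(F^{\perp})_d$ is base-point free, apply Bertini in characteristic zero to pick a reduced member avoiding the finitely many forbidden points of $\mathbb{P}V$, conclude via apolarity (Lemma \ref{ref:containment:lem}), and certify the lower bound with $x_1x_2^{d-1}$, whose apolar ideal $(y_1^2,y_2^d)$ contains no squarefree form of degree below $d$. The only cosmetic difference is that you derive base-point freeness from Lemma \ref{ref:badispower:lem} with $e=d$, whereas the paper invokes Sylvester's description of $F^{\perp}$ as a complete intersection of degrees $d_1+d_2=d+2$ with $\min(d_1,d_2)\geq 2$; both are valid.
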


    \begin{proof}
        \def\DFp{F^{\perp}}
        Let $S = k[x_1, x_2], F\in S_d$ and $V \subsetneq S_1$ be homogeneous and
        Zariski-closed. We would like to
        prove that $\DRk{F, V}\leq d$.
        It is a classical result by Sylvester that $\DFp$ is a
        complete intersection generated by elements of degrees $d_1$ and $d_2$ such
        that $d_1 + d_2 = d+2$. If $\min(d_1, d_2) = 1$ then $F$ is
        does not essentially depend on two variables. Thus $\min(d_1, d_2) \geq
        2$ and $\max(d_1, d_2) \leq d$, in particular the linear system
        $\DFp_d$ on $\mathbb{P}S_1$ is base point free.
        By Bertini Theorem \cite[Thm III.10.9]{Har} a general element $D$ of
        $\DFp_{d}$ is smooth and does not intersect $V$. The zero set
        of $D$ is a sum of $d$ points, which, by Lemma
        \ref{ref:containment:lem}, gives a required presentation of $F$.
        On the other hand the apolar ideal $(x_1^{d-1}x_2)^{\perp}$ is generated by a square of
        a linear form  and a $d$-th power of another linear form, thus there are no
        smooth forms of degree less than $d$ in this ideal and so the Waring
        rank of $x_1^{d-1}x_2$ is $d$.
    \end{proof}

    \begin{lem}\label{ref:quadratic:lem}
        Let $n\geq 1$ be an integer, then $\DRk{n, 2} = n$.
    \end{lem}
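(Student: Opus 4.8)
The plan is to prove that $\DRk{n,2} = n$ for every $n \geq 1$. A quadratic form $F \in S_2$ that essentially depends on $n$ variables is, after a linear change of coordinates, equivalent to a nondegenerate quadric, which (since $k$ is algebraically closed of characteristic $0$) can be diagonalized to $F = x_1^2 + \dots + x_n^2$. Thus it plainly admits a presentation as a sum of $n$ squares of linear forms, giving the lower-bound direction $\DRk{n,2} \geq n$ exactly as in the one-variable count: no presentation with fewer than $n$ summands can exist, since the linear forms appearing in any power-sum presentation must span the $n$-dimensional space of variables on which $F$ essentially depends.

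For the upper bound I would show $\DRk{F,V} \leq n$ for every proper homogeneous Zariski-closed $V \subsetneq S_1$. The key observation is that $F^\perp$ in degree $1$ is trivial (as $F$ essentially depends on $n$ variables, no nonzero linear differential annihilates it), so the linear system $\mathcal{L} = (F^\perp)_1$ is empty and imposes no constraints; the relevant geometry happens one degree up. Concretely, for the diagonal form the apolar ideal is generated in degree $2$ by the operators $\partial_i \partial_j$ for $i \neq j$, together with the differences $\partial_i^2 - \partial_j^2$. The plan is to exhibit an explicit presentation $F = \sum_{i=1}^n l_i^2$ whose linear forms $l_i$ avoid $V$: since $V$ is a proper closed subset, a generic choice of an orthogonal (with respect to the form) coordinate system will have all its axis-spanning linear forms outside $V$.

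The cleaner way to organize this, paralleling the degree-$d$ case in the previous lemma, is to argue that presentations of $F$ correspond to suitable elements of an apolar linear system and then invoke genericity. By Lemma~\ref{ref:containment:lem}, it suffices to find $n$ distinct points $[l_1], \dots, [l_n] \in \mathbb{P}S_1$, none lying in $V$, whose intersection of homogeneous ideals is contained in $F^\perp$. For the diagonal quadric these are the $n$ coordinate points of any orthonormal frame, and the set of frames yielding such a point configuration is parametrized by (a dense open subset of) the orthogonal group of $F$. Since $V$ is proper and closed and the orthogonal group acts with enough freedom, a generic frame places every $[l_i]$ off $V$ while keeping them distinct, which yields the desired presentation and hence $\DRk{F,V} \leq n$.

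The main obstacle is making the genericity argument fully rigorous: one must check that the orthogonal group of $F$ acts on $\mathbb{P}S_1$ with orbits large enough that a generic frame avoids the proper closed set $V$ in all $n$ of its axes simultaneously. I expect this to reduce to observing that for each index $i$ the condition ``$[l_i] \in V$'' is a proper closed condition on the frame, so its union over $i = 1, \dots, n$ is still a proper closed subset of the (irreducible) parameter space of frames, leaving a dense open set of valid choices.
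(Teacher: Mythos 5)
Your proposal is correct in substance but takes a genuinely different route from the paper. The paper proves the upper bound by induction on $n$: it picks a generic $\alpha\in S^*_1$ with $\alpha^2\hook F\neq 0$ and $V(\alpha)\not\subset V$, completes the square by subtracting $\pp{\alpha\hook F}^2/(2\,\alpha^2\hook F)$, and thereby peels off one power of a linear form while reducing to a quadric in $n-1$ variables with the smaller closed set $V\cap V(\alpha)$; this mirrors, in miniature, the inductive step of Theorem~\ref{ref:mainthm:thm}. You instead diagonalize $F$ to $x_1^2+\dots+x_n^2$ and observe that every orthogonal frame gives a length-$n$ presentation, so it suffices to show that a generic element of the orthogonal group of $F$ places all $n$ rows outside $V$. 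Your genericity argument does close: you should work with $SO(F)$ (which is irreducible, unlike the full orthogonal group), note that for each $i$ the map $g\mapsto [g\cdot x_i]$ is dominant onto $\mathbb{P}S_1$ (its image is the dense open set of non-isotropic lines, by Witt's theorem), so each condition $[l_i]\in V'$ is a proper closed condition and their finite union is still proper; the case $n=1$ is checked by hand. Two small remarks: the detour through $(F^\perp)_1$, the degree-$2$ generators of the apolar ideal, and Lemma~\ref{ref:containment:lem} is unnecessary --- the identity $F=\sum l_i^2$ for the rows of an orthogonal matrix is immediate --- and your lower-bound argument coincides with the paper's. The trade-off: your proof is a clean global statement parametrizing \emph{all} minimal presentations at once, but it leans on structure theory of the orthogonal group; the paper's induction is more elementary and deliberately rehearses the technique reused for general degree.
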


    \begin{proof}
        The inequality $\DRk{n, 2} \geq n$ is trivial because the sum of less
        that $n$ squares does not essentially depend on $n$ variables.
        We prove the other inequality by induction on $n$, the base being
        clear.
        Let $n\geq 2$. Take $F\in S_2$ which essentially depends on $n$
        variables and $V \subsetneq S_1$ homogeneous and Zariski-closed.

        \def\DTone{S^*_1}
        Think about $\DTone$ as an affine space. For
        $\partial\in \DTone$ the condition $\partial^2\hook F = 0$ is
        Zariski-closed. Take any $\alpha\in \DTone$ such that
        $\alpha^2\hook F \neq 0$ and $V(\alpha) \not\subset V$.
        Let
        \[
        F' = F - \frac{\pp{\alpha\hook F}^2}{2\cdot\alpha^2\hook F},
        \]
        then $\alpha\hook F' = 0$, thus $F'$ may be written, after a linear
        change of coordinates, in $n - 1$ variables $x'_1,\dots,x_{n-1}'$ such
        that $\alpha\hook x'_1 = 0$. From the definition of $F'$ it follows that $F$
        may be written using one more variable than $F'$, thus $F'$
        essentially depends on $n-1$ variables. Furthermore $V' = V\cap
        V(\alpha)\neq V(\alpha)$ is a homogeneous Zariski-closed set, so that
        $\DRk{F', V'} \leq n -1$ by induction, and we obtain $\DRk{F, V} \leq
        \DRk{F', V'} + 1\leq n$.
    \end{proof}

    \begin{lem}
        Let $n, d\geq 3$ be integers, then
        \[
        \DRk{n, d} \leq \DRk{n-1, d} + \DRk{n, d-1}.
        \]
    \end{lem}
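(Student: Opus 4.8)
The plan is to adapt the differentiate-then-integrate argument. Fix a form $F\in S_d$ essentially depending on $n$ variables and a homogeneous Zariski-closed $V\subsetneq S_1$; I want to bound $\DRk{F,V}$. After a linear change of coordinates I choose the last variable $x_n$ and the complementary hyperplane $H_0=\langle x_1,\dots,x_{n-1}\rangle\subseteq S_1$ so that (i) $F$ genuinely involves $x_n$, hence $G:=\frac{\partial F}{\partial x_n}\in S_{d-1}$ is nonzero, and (ii) $H_0\not\subseteq V$. Both are open conditions on the change of coordinates and are satisfiable because $V$ is a proper closed cone. The point of (ii) is that then $V\cup H_0\subsetneq S_1$ and $V\cap H_0\subsetneq H_0$ are proper closed cones, which is what lets me apply the open-rank bounds below.

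The algebraic identity driving the proof is
\[
F=\left.F\right|_{x_n=0}+\int_0^{x_n} G\,dx_n,
\]
valid over a field of characteristic $0$, together with the observation that for a linear form $m=m'+a\,x_n$ with $m'\in H_0$ and $a\neq 0$ one has $\int_0^{x_n} m^{\,d-1}\,dx_n=\frac{1}{d\,a}\pp{m^d-(m')^d}$. Consequently, if I decompose $G=\sum_k c_k m_k^{\,d-1}$ with every $m_k\notin H_0$ (so that its $x_n$-coefficient $a_k$ is nonzero) and every $m_k\notin V$, then
\[
F=\underbrace{\left(\left.F\right|_{x_n=0}-\sum_k\frac{c_k}{d\,a_k}(m_k')^d\right)}_{=:H}+\sum_k\frac{c_k}{d\,a_k}\,m_k^d,
\]
where $H$ is a form of degree $d$ lying in the $(n-1)$-variable subring $k[H_0]$.

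To finish I count powers. Decomposing $G$ against the forbidden set $V\cup H_0$ produces such an expression with at most $\DRk{n,d-1}$ terms, all avoiding $V$. Decomposing $H$, viewed as a degree-$d$ form in the $n-1$ variables $H_0$, against the proper forbidden set $V\cap H_0$ produces at most $\DRk{n-1,d}$ powers of linear forms in $H_0$ that avoid $V$. Because $V$ is homogeneous I may absorb each scalar coefficient into its linear form without leaving the complement of $V$ (a $d$-th root exists since the field is algebraically closed), so the two groups together give a presentation $F=\sum l_i^d$ with all $l_i\notin V$ and at most $\DRk{n-1,d}+\DRk{n,d-1}$ summands. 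Taking the supremum over $V$ and then over $F$ yields the claimed inequality.

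The heart of the matter is the requirement that every $m_k$ genuinely involve $x_n$: without it the integral of $m_k^{\,d-1}$ is $x_n\,m_k^{\,d-1}$, which is not a $d$-th power and breaks the reduction. This is precisely the step that is impossible for the ordinary Waring rank but becomes available for the open rank, since forbidding the hyperplane $H_0$ forces $a_k\neq 0$; arranging $H_0\not\subseteq V$ so that the relevant forbidden sets stay proper is the accompanying subtlety. One routine point to check is the degenerate case in which $G$ or $H$ depends on fewer than the expected number of variables, which is handled by the monotonicity of $\DRk{n,d}$ in $n$, verified separately.
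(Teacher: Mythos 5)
Your overall strategy is the same as the paper's: differentiate $F$ in a suitable direction, decompose the derivative while forbidding the kernel hyperplane so that each $(d-1)$-st power integrates to a genuine $d$-th power, then decompose the remainder, which lives in $n-1$ variables. Indeed your $H$ is literally the paper's $F-F_1$, and your integration identity is its observation that $\alpha\hook(F-F_1)=0$. The gap is in the step you dismiss as routine at the end: the ``monotonicity'' you invoke is false. The statement you actually need is that a form $G$ of degree $e$ in an $N$-variable ring which essentially depends on only $m<N$ variables still satisfies $\DRk{G,W}\leq \DRk{N,e}$ (or $\leq\DRk{m,e}$) for every proper closed homogeneous $W$. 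Counterexample: in $k[x,y]$ take $G=x^d$ and $W=kx$. Any presentation $x^d=\sum l_i^d$ with $l_i\notin kx$ can be grouped into $x^d=\sum_{j=1}^r c_j\lambda_j^d$ with the $[\lambda_j]$ pairwise distinct and all different from $[x]$; since any $d+1$ powers of pairwise non-proportional binary linear forms are linearly independent in degree $d$, having $r\leq d$ would make $x^d,\lambda_1^d,\dots,\lambda_r^d$ independent, a contradiction. Hence $\DRk{x^d,kx}=d+1$, strictly larger than $\DRk{2,d}=d$ and than $\DRk{1,d}=1$. The failure occurs exactly when the essential variables of the form lie inside the forbidden set --- which is precisely the situation you must worry about: your $G=\partial F/\partial x_n$ may essentially depend only on variables inside the forbidden hyperplane $H_0$ (for instance $F=x_1^{d-1}x_n+x_2^d+\cdots+x_{n-1}^d$ gives $G=x_1^{d-1}$), and your $H$ may essentially depend on a subspace of $H_0$ that is contained in $V\cap H_0$.

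The two degeneracies need different repairs, and supplying them is where the paper does its real work. For $G$ one can strengthen the genericity of the coordinate change: the paper picks $\alpha$ so that $\alpha\hook F$ essentially depends on all $n$ variables (an open non-empty condition when $d\geq 3$), after which the bound by $\DRk{n,d-1}$ holds by definition. For the remainder no genericity is left, since $H$ is determined by $F$ and the chosen decomposition of $G$; the paper instead replaces $H$ by $F_2=F-\sum_{i\in T}\alpha_i l_i^d$ for a \emph{minimal} subset $T$ of the summands such that $F_2$ is annihilated by some $\beta$ with $V(\beta)\not\subset V$, and a short linear-algebra argument shows that minimality forces $F_2$ to essentially depend on exactly $n-1$ variables, so that $\DRk{n-1,d}$ applies. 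Some device of this kind is indispensable; as written, your count of the summands contributed by $H$ (and, strictly, by $G$ as well) is not justified.
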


    \begin{proof}
        \def\DTone{S^*_1}
        Take $F\in S_d$ which essentially depends on $n$ variables and $V
        \subsetneq S_1$ homogeneous and Zariski-closed. Take $\alpha\in \DTone$ such
        that $V(\alpha) \not\subset V$ and $F' = \alpha\hook F$ essentially
        depends on $n$ variables (these are open non-empty conditions).
        The form $F'$ has a presentation
        \[
        F' = \sum_{i=1}^m l_i^{d-1},
        \]
        where $m = \DRk{F', V \cup V(\alpha)}$ and $l_i\not\in V\cup
        V(\alpha)$. Note that $l_i\not\in V(\alpha)$ is equivalent to
        $\alpha\hook l_i \neq 0$. Take
        \begin{equation}\label{eqref:localF1}
            F_1 = \sum_{i=1}^m \alpha_i\cdot l_i^d
        \end{equation}
        where $\alpha_i = (d\cdot\alpha\hook l_i)^{-1}$, then $\alpha\hook (F - F_1)
        = 0$. Let $T \subseteq \left\{ 1, 2, \dots, m \right\}$ be a minimal
        set of indexes such that there exists $0\neq \beta = \beta_T\in \DTone$ such that
        \begin{enumerate}
            \item $V(\beta) \not\subset V$,
            \item $F_2 := F - \sum_{i\in T} \alpha_i\cdot l_i^d$ is annihilated by
                $\beta$,
        \end{enumerate}
        \def\DtmpFtwop{\pp{F_{2}}^{\perp}_1}
        (the set $T = \{1, \dots, m\}$ with $\alpha = \beta_T$ satisfies the
        above hypotheses except, perhaps, minimality).
        We claim that the form $F_2$ obtained from a minimal $T$ essentially depends
        on $n - 1$ variables.
        If this is not the case then we take $i\in T$ such that $F_2 + \alpha_i\cdot
        l_i^d$ essentially depends on more variables than $F_2$. The space $\DtmpFtwop$ is at
        least two-dimensional, thus its intersection with $\pp{l_i^d}^{\perp}_1$
        contains a non-zero element $\beta'$. Since $l_i\in V(\beta')\setminus V$,
        we have $V(\beta')\not\subset V$ and the set $T' := T \setminus \{i\}$
        satisfies the above conditions. This contradicts the minimality of $T$.

        Since $F_2\in k[x_1,\dots,x_n]$ essentially depends on $n-1$ variables
        lying in $V(\beta)$ and $V \cap V(\beta) \neq
        V(\beta)$, the form $F_2$ may be written as $m_2 \leq
        \DRk{n-1, d}$ powers of linear forms taken from outside $V$. The field $k$ is algebraically
        closed, thus  \eqref{eqref:localF1} shows that $F = (F-F_2) + F_2$ may
        be written using at most $m + m_2 \leq \DRk{n, d-1} + \DRk{n-1, d}$ powers of
        linear forms taken from outside $V$.
    \end{proof}

    \section{Proof of Theorem \ref{ref:ineqthm:thm}}

    From now on $n = 3$,~i.e.~$S := k[x_1, x_2, x_3]$.
    First we deal with the majority of forms, using the following lemma:

    \begin{lem}\label{goodforms:lem}
        Let $F\in S_3$ be such that $V( (F^{\perp})_2) \subseteq
        \mathbb{P}S_1$ is an empty set. Then
        $\DRk{F, V}\leq 4$ for any homogeneous closed $V \subsetneq S_1$.
    \end{lem}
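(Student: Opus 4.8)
The plan is to produce the presentation of $F$ from a transversal intersection of two conics inside the apolar ideal: I will find $G, H \in (F^{\perp})_2$ meeting transversally in four distinct points lying outside $V$, and then feed those points into Lemmas \ref{ref:unmixedness:lem} and \ref{ref:containment:lem}. To set this up, write $\mathcal{L} := (F^{\perp})_2$, viewed as a linear system of conics on $\mathbb{P}S_1 = \mathbb{P}^2$. The hypothesis $V(\mathcal{L}) = \emptyset$ says precisely that $\mathcal{L}$ is base-point-free, and a catalecticant count bounds its dimension: $(F^{\perp})_2$ is the kernel of the contraction $S^*_2 \to S_1$, $\partial \mapsto \partial \hook F$, whose source has dimension $6$ and target dimension $3$, so $\dim \mathcal{L} \geq 3$ and $\mathcal{L}$ is at least a net of conics.

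Since $k$ has characteristic $0$ and $\mathcal{L}$ is base-point-free, Bertini's theorem \cite[Thm III.10.9]{Har} produces a general member $G \in \mathcal{L}$ that is smooth; a smooth plane curve of degree $2$ is an irreducible conic, so $G \cong \mathbb{P}^1$. Moreover, the members of $\mathcal{L}$ form a positive-dimensional family of conics, whereas only finitely many conics are contained in the proper closed set cut out by $V$ in $\mathbb{P}S_1$; hence the general $G$ is not contained in $V$, and $G \cap V$ is a finite subset of $G$.

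Now I would restrict the system to $G$. Because $\mathcal{L}$ is base-point-free on $\mathbb{P}S_1$, the restriction $\mathcal{L}|_G$ is a base-point-free linear system on $G \cong \mathbb{P}^1$, of degree $4$ (the degree of $\mathcal{O}_{\mathbb{P}^2}(2)$ restricted to the conic $G$). A general member $H|_G$ therefore consists of $4$ distinct reduced points, and since $\mathcal{L}|_G$ is base-point-free I can additionally require that these points avoid the finite set $G \cap V$: each condition ``$H$ vanishes at a prescribed point of $G$'' cuts out a proper linear subsystem, so finitely many avoidance conditions remain open and dense. Fixing such an $H \in \mathcal{L}$, the scheme $G \cap \{H = 0\}$ consists of $4$ distinct reduced points $a_1, \dots, a_4$; as $G$ is smooth, each has intersection multiplicity $1$, so $G$ and $H$ meet transversally, and $a_i = [l_i]$ with $l_i \notin V$ by construction.

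Finally, Lemma \ref{ref:unmixedness:lem} gives $\bigcap_{i=1}^{4} \mathfrak{m}_{a_i} = (G, H)$, and since $G, H \in \mathcal{L} \subseteq F^{\perp}$ and $F^{\perp}$ is an ideal, $\bigcap_{i} \mathfrak{m}_{a_i} \subseteq F^{\perp}$. Lemma \ref{ref:containment:lem} then yields $F \in \lin\!\left(l_1^3, \dots, l_4^3\right)$, and rescaling the $l_i$ (possible as $k$ is algebraically closed and $V$ is a cone) gives $F = \sum_{i=1}^{4} l_i^3$ with $l_i \notin V$, so $\DRk{F, V} \leq 4$. The step I expect to be the main obstacle is the simultaneous control in the two Bertini applications: arranging that the second member $H$ cuts the smooth conic $G$ in four genuinely distinct (reduced, hence transversal) points while keeping all four off $V$. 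This hinges on $\mathcal{L}|_G$ being base-point-free of degree $4$, and the delicate bookkeeping is verifying that ``missing a prescribed point of $G$'' is an open dense condition so that finitely many such constraints can be imposed at once.
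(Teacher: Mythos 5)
Your proposal is correct and follows essentially the same route as the paper: two applications of Bertini to the base-point-free net $(F^{\perp})_2$ — first to get a smooth conic $D_0=G$ meeting the projectivization of $V$ properly, then restricting to $G$ to find $D_1=H$ cutting out four distinct points off $V$ — followed by Lemmas \ref{ref:unmixedness:lem} and \ref{ref:containment:lem}. Your version just spells out more of the bookkeeping (the catalecticant dimension count, the degree-$4$ restriction to $G\cong\mathbb{P}^1$, and the avoidance conditions) that the paper leaves implicit.
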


    \begin{proof}
        Let $V' \subseteq \mathbb{P}S_1$ be the image of
        $V\setminus\left\{0\right\}$, then $V'$ is closed and not equal to
        $\mathbb{P}S_1$.

        By Bertini theorem \cite[Thm III.10.9]{Har} applied to the base point free linear system
        $(F^{\perp})_2$ on $\mathbb{P}S_1$ we see that the general element $D$ of this system is
        smooth. At the same time a general
        element $D$ intersects $V'$ properly i.e. $\dim V(D)\cap V' < \dim
        V'$. We choose $D_0$ satisfying both properties.

        Restricting to $V(D_0)$ and using Bertini theorem once more we obtain
        an element $D_1\in (F^{\perp})_2$ such that $V(D_0)\cap V(D_1)$ is
        smooth of dimension zero and $V(D_0)\cap V(D_1)\cap V'$ is empty.
        From Lemmas \ref{ref:containment:lem},\ref{ref:unmixedness:lem} it follows that
        \[
        F\in \lin\left(l_{a_1}^3, l_{a_2}^3, l_{a_3}^3, l_{a_4}^3\right)
        \]
        where $\{a_1,a_2,a_3,a_4\} = V(D_0, D_1)$ so
        $\{a_1,a_2,a_3,a_4\}\cap V' = \emptyset$.
    \end{proof}

    Now we would like to show that the set of ``bad forms'', i.e.~those which
    do not satisfy the assumptions of Lemma \ref{goodforms:lem},
    is closed in the (open) set of all forms which essentially depend on
    three variables.

    \begin{cor}\label{closed:cor}
        Denote by $Ess$ the (open) set of forms which essentially depend on three
        variables and let $W \subseteq Ess$ be the subset consisting of forms such that $V( (F^{\perp})_2) \subseteq
        \mathbb{P}S_1$ is not an empty set. Then $W$ is closed in $Ess$.
    \end{cor}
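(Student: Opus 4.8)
The plan is to recognize the statement as the special case $d = 3$, $e = 1$ of Corollary \ref{ref:badareclosed:lem}; the bulk of the work is then already done, and what remains is only to match the two descriptions of the ambient open set.

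First I would check that $Ess$ coincides with the set $Ess_{3,1}$ of Corollary \ref{ref:badareclosed:lem}, i.e.~that a cubic $F$ essentially depends on three variables if and only if no nonzero linear differential operator annihilates it. Indeed, if $0 \neq \partial \in S_1^*$ satisfies $\partial \hook F = 0$, then after a linear change of coordinates we may take $\partial = \frac{\partial}{\partial x_1}$, and $\frac{\partial}{\partial x_1}\hook F = 0$ forces $F \in k[x_2, x_3]$ since $k$ has characteristic zero; hence $F$ does not essentially depend on three variables. Conversely, if $F$ lies in $k[x_2, x_3]$ after a coordinate change, then $\frac{\partial}{\partial x_1}\hook F = 0$ produces a nonzero element of $(F^{\perp})_1$. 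Thus $F \in Ess$ precisely when $(F^{\perp})_1 = 0$, that is, $Ess = Ess_{3,1}$; in particular $Ess$ is Zariski open in $S_3$.

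Second, taking $d = 3$ and $e = 1$ gives $d - e = 2$, and a point of $\mathbb{P}S_1$ belongs to $V((F^{\perp})_2)$ exactly when it is a base point of the linear system $(F^{\perp})_2$. Therefore the set $W$ is precisely the set of $F \in Ess_{3,1}$ for which $(F^{\perp})_{d-e}$ has a base point in $\mathbb{P}S_1$, which is closed in $Ess_{3,1} = Ess$ by Corollary \ref{ref:badareclosed:lem}. This completes the argument.

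The only substantive step is the dictionary $Ess = Ess_{3,1}$; once it is in place the corollary is an immediate instance of the already-established closedness result, so I expect no real obstacle beyond making this identification precise.
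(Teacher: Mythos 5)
Your proposal is correct and takes exactly the same route as the paper, which simply cites Corollary \ref{ref:badareclosed:lem} with $d=3$, $e=1$; you additionally spell out the identification $Ess = Ess_{3,1}$ and the fact that the base locus of $(F^{\perp})_2$ is $V((F^{\perp})_2)$, both of which the paper leaves implicit and both of which you verify correctly.
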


    \begin{proof}
        This follows from  Corollary \ref{ref:badareclosed:lem} applied to the case
        $d=3,e=1$.
    \end{proof}

    Finally,we need an explicit characterisation of the
    ``bad forms'' due to Kleppe:

    \begin{prop}\label{Kleppes:lemma}
        Consider the set of forms $F\in S_3$ essentially dependent on three variables and such that $V( (F^{\perp})_2) \subseteq
        \mathbb{P}S_1$ is \emph{not} an empty set. Every element of this set is an image, under a linear
        change of basis in $S_1$, of one of the following forms
        \begin{equation}\label{badforms:standardform}
        x_0 x_1^2 + x_1 x_2^2\quad\mbox{ or }\quad x_0^3 + g\mbox{ where } g\in
        k[x_1,x_2]_3.
        \end{equation}
        Furthermore the classical Waring rank of $x_0 x_1^2 + x_1 x_2^2$ is
        five.
    \end{prop}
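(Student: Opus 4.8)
I would prove the two assertions of Proposition~\ref{Kleppes:lemma} separately: first the normal form, then the rank.

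\medskip
\noindent\emph{Classification into the two normal forms.} By Lemma~\ref{ref:badispower:lem} applied with $d=3$, $e=2$, the hypothesis $V\pp{(F^{\perp})_2}\neq\emptyset$ is equivalent to the existence of $\partial\in S^{*}_1$ and $0\neq l\in S_1$ with $\partial\hook F=l^2$. After a linear change of coordinates I may assume $\partial=\partial_0:=\frac{\partial}{\partial x_0}$, so $\partial_0\hook F=l^2$. I distinguish two cases according to whether $\partial_0\hook l\neq 0$. If $\partial_0\hook l\neq 0$, write $l=\beta x_0+q$ with $\beta\neq 0$ and $q\in k[x_1,x_2]_1$; integrating $\partial_0\hook F=l^2$ in $x_0$ and completing the cube (solving $(ax_0+bq)^3$ against the first three coefficients, which is consistent because $\beta^2=3a^3$) gives, after replacing $ax_0+bq$ by $x_0$, the form $F=x_0^3+g$ with $g\in k[x_1,x_2]_3$. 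If $\partial_0\hook l=0$, then $l\in\langle x_1,x_2\rangle$; it is nonzero since otherwise $F\in k[x_1,x_2]$ would not essentially depend on three variables, so I normalise $l=x_1$, obtaining $F=x_0x_1^2+p_3$ with $p_3\in k[x_1,x_2]_3$. Shifting $x_0$ by multiples of $x_1,x_2$ removes the $x_1^3$ and $x_1^2x_2$ components of $p_3$, leaving $F=x_0x_1^2+bx_1x_2^2+cx_2^3$. A short computation of the rank-one locus of the polar quadric shows that if $c\neq 0$ then the direction $\tfrac{b^2}{3c}\partial_0+\partial_2$ yields $\partial\hook F=\tfrac{1}{3c}(bx_1+3cx_2)^2$ with $\partial\hook(bx_1+3cx_2)\neq 0$, placing $F$ back in the first case, hence $F\sim x_0^3+g$; if $c=0$ and $b\neq 0$ a rescaling of $x_2$ gives the first normal form $x_0x_1^2+x_1x_2^2$; and $b=c=0$ is excluded by essential dependence.

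\medskip
\noindent\emph{Setup for the rank of $F=x_0x_1^2+x_1x_2^2$.} Since $(F^{\perp})_1=0$, the apolar algebra has Hilbert function $(1,3,3,1)$ and $\dim(F^{\perp})_2=3$. A direct computation of the second derivatives gives
\[
(F^{\perp})_2=\lin\pp{\partial_0^2,\ \partial_0\partial_2,\ \partial_0\partial_1-\partial_2^2},
\]
a net of conics on $\mathbb{P}S_1$ whose only base point is $p=[x_1]$. The crucial geometric feature is that in the affine chart about $p$ (local coordinates $\partial_0,\partial_2$) a general member has linear part proportional to $\partial_0$, so \emph{every} member of the net that is smooth at $p$ is tangent there to the common line $\{\partial_0=0\}$, while the remaining members lie in the subpencil $\lin\pp{\partial_0^2,\partial_0\partial_2}=\partial_0\cdot\langle\partial_0,\partial_2\rangle$ of reducible conics through that line.

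\medskip
\noindent\emph{Lower bound $\mathrm{rank}(F)\geq 5$.} Suppose for contradiction that $\mathrm{rank}(F)\leq 4$. By Lemma~\ref{ref:containment:lem} (the apolarity lemma) there is a reduced set $Z$ of $r\leq 4$ points with $I_Z\subseteq F^{\perp}$, and then $r=\mathrm{rank}(F)$. Since $\dim(I_Z)_2\geq 6-r\geq 2$ and $Z$ is not a single point, one checks $\dim(I_Z)_2=2$; ruling out the collinear and $3{+}1$ configurations (which would force $(I_Z)_2$ to consist of conics through a fixed line, incompatible with the irreducible member $\partial_0\partial_1-\partial_2^2$ of the net) leaves $Z=V(Q,Q')$ a transverse complete intersection of two conics $Q,Q'$ of the net. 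But $p$ is a base point of the net, so $p\in Z$, and by the tangency observation $Q,Q'$ meet at $p$ with multiplicity $\geq 2$ (or, in the reducible case, their intersection is not a finite reduced set of four points); this contradicts $Z$ being four distinct reduced points. Hence $\mathrm{rank}(F)\geq 5$. I expect this step to be the main obstacle: the whole argument hinges on reading off the tangency of the net at its single base point and carefully excluding the degenerate pencils.

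\medskip
\noindent\emph{Upper bound $\mathrm{rank}(F)\leq 5$.} Set $G:=F-x_0^3=-x_0^3+x_0x_1^2+x_1x_2^2$. Computing the rank-one locus of the polar quadric of $G$ shows that $\partial\hook G$ is a perfect square only for $\partial=0$, so $V\pp{(G^{\perp})_2}=\emptyset$ and $G$ still essentially depends on three variables. By Lemma~\ref{goodforms:lem} (with $V=\emptyset$) we get $\mathrm{rank}(G)\leq 4$, and since $F=G+x_0^3$ we conclude $\mathrm{rank}(F)\leq 5$. Combined with the lower bound this proves $\mathrm{rank}(F)=5$.
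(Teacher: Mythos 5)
Your proof is correct, but it is genuinely different from the paper's, which offers no argument at all for this proposition: it simply cites Kleppe's thesis \cite[Theorem 2.3]{Kl}. You have in effect reconstructed the content of that citation from the paper's own toolkit. Your normal-form classification via Lemma \ref{ref:badispower:lem} (splitting on whether the directional derivative $\partial$ with $\partial\hook F=l^2$ satisfies $\partial\hook l\neq 0$, completing the cube in the first case, and showing the residual family $x_0x_1^2+bx_1x_2^2+cx_2^3$ with $c\neq 0$ falls back into the first case) checks out, as do the computations $(F^{\perp})_2=\lin(\partial_0^2,\partial_0\partial_2,\partial_0\partial_1-\partial_2^2)$ and the rank-one analysis for $G=F-x_0^3$; your upper bound via Lemma \ref{goodforms:lem} is a nice reuse of the paper's machinery. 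Two small points deserve tightening. First, Lemma \ref{ref:containment:lem} as stated gives only the implication from $\bigcap\mathfrak{m}_i\subseteq F^{\perp}$ to $F\in\lin(l_i^d)$; for the lower bound you need the (standard, easy) converse that a decomposition $F=\sum l_i^3$ forces $I_Z\subseteq F^{\perp}$, which you should state or cite separately. Second, in the lower bound your stated reason for excluding the collinear and $3{+}1$ configurations is slightly off: for the $3{+}1$ configuration $(I_Z)_2$ is only a pencil, so it does not conflict with the \emph{net} having an irreducible member; rather, an all-reducible pencil inside the net must equal $\partial_0\cdot\langle\partial_0,\partial_2\rangle$ (the locus $\det=\nu^3/4=0$), whose base locus is the line $V(\partial_0)$, forcing $Z\subseteq V(\partial_0)$ and hence $F\in k[x_1,x_2]$, a contradiction; the collinear case forces $\dim(I_Z)_2=3$ and is excluded because the base locus of the full net is the single point $[x_1]$. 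With those justifications supplied, your tangency argument at the base point correctly yields intersection multiplicity at least $2$ and hence at most three distinct points in $V(Q)\cap V(Q')$, completing the bound $\operatorname{rank}(F)\geq 5$. The benefit of your route is that the paper becomes self-contained on this point; the cost is about a page of case analysis that the paper outsources.
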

    \begin{proof}
        See \cite[Theorem 2.3]{Kl}.
    \end{proof}

    \vspace{1cm}
    \begin{proof}[Proof of Theorem \ref{ref:ineqthm:thm}]
        By Proposition \ref{Kleppes:lemma} it sufficies to prove $\DRk{3,
        3}\leq 5$.
        Take a form $F\in S_3$ which essentially depends on three variables and a homogeneous closed subset $V
        \subsetneq \mathbb{A}^3$.

        If $F$ satisfies the assumptions of Lemma \ref{goodforms:lem} then
        $\DRk{F, V}\leq 4$ and we are done. Denote the set of the forms
        which essentially depend on three variables and satisfy the
        assumptions of Lemma \ref{goodforms:lem} by $U$.

        If $F\notin U$, then $F\in W$, where $W$ was defined in
        Corollary \ref{closed:cor}. In this case we would like
        to find a linear form $l$ such that $F + l^3\in U$.
        After a linear change of
        coordinates we can assume $F$ is of the form from Lemma
        \ref{Kleppes:lemma}. For $x_0x_1^2 + x_1x_2^2$ the
        form $l^3 = (x_0+x_1)^3$ will do and in the second case we can write $g =
        x_1x_2(a_1x_1 + a_2x_2)$ where $a_1\neq 0$, then $l^3 = (x_0+x_2)^3$ will
        do.

        The set of forms
        which essentially depend on three variables is open in the set of all
        forms and the set $U$ is open in this set by Corollary
        \ref{closed:cor}, so that $U$ is open in the
        set of all forms. We have just seen that $U$ has non-empty intersection
        with $\{F + l^3\}$, so $U\cap \{F + l^3\}$ is open in this set,
        choosing $l\notin V$ such that $F + l^3\in U$ we get the required
        result.
    \end{proof}

    \subsection*{Acknowledgements}

    The author is very grateful to Jaros\l{}aw Buczy\'nski for suggesting
    this research topic as well as his constant support, and to Enrico
    Carlini for insisting on writing down
    this paper.

\end{document}